\algnewcommand{\LineComment}[1]{\State  #1}
\newtheorem{theorem}{Theorem}
\newtheorem{remark}{Remark}
\newtheorem{corollary}{Corollary}
\newtheorem{proposition}{Proposition}	
\newtheorem{assumption}{Assumption}
\def\bL{\mathbf{L}}
\def\bx{\mathbf{x}}
\def\by{\mathbf{y}}
\def\R{\mathbb{R}}
\def\bnu{\boldsymbol{\nu}}
\title{A Chebyshev-Accelerated Primal-Dual Method for Distributed Optimization}
\author{Jacob H. Seidman$^1$, Mahyar Fazlyab$^2$, George J. Pappas$^2$, Victor M. Preciado$^2$
\thanks{1: Department of Applied Mathematics and Computational Science, University of Pennsylvania Email: seidj@sas.upenn.edu. 2: Department of Electrical and Systems Engineering, University of Pennsylvania. Email: \{mahyarfa, pappasg, preciado \}@seas.upenn.edu. Work was supported by the NSF under grants CAREER-ECCS-1651433 and IIS-1447470.}}
\begin{document}

\maketitle
\thispagestyle{empty}
\pagestyle{empty}

\begin{abstract}
We consider a distributed optimization problem over a network of agents aiming to minimize a global objective function that is the sum of local convex and composite cost functions. To this end, we propose a distributed Chebyshev-accelerated primal-dual algorithm to achieve faster ergodic convergence rates. In standard distributed primal-dual algorithms, the speed of convergence towards a global optimum (i.e., a saddle point in the corresponding Lagrangian function) is directly influenced by the eigenvalues of the Laplacian matrix representing the communication graph. In this paper, we use Chebyshev matrix polynomials to generate gossip matrices whose spectral properties result in faster convergence speeds, while allowing for a fully distributed implementation. As a result, the proposed algorithm requires fewer gradient updates at the cost of additional rounds of communications between agents. We illustrate the performance of the proposed algorithm in a distributed signal recovery problem. Our simulations show how the use of Chebyshev matrix polynomials can be used to improve the convergence speed of a primal-dual algorithm over communication networks, especially in networks with poor spectral properties, by trading local computation by communication rounds.
\end{abstract}

%
\section{Introduction}\label{se:introduction}

In distributed consensus optimization, a network of agents aims to minimize a function constructed from individual (private) functions situated at the nodes of the network.  As the size of the network increases, it is important to develop fast and efficient distributed methods where agents can perform local computations to collectively find the global optimal solution.  This scenario arises in, for example, distributed learning problems \cite{yan2013distributed}, economic dispatch problems \cite{bai2017distributed}, and multi-agent coordination problems \cite{dimarogonas2012distributed}.

Methods to solve such distributed optimization problems fall into three main categories: \emph{(i)} primal methods, in which the problem is solved entirely in the primal domain and asymptotic consensus among agents is forced using an additional penalty function representing disagreements among agents \cite{shi2015extra, yuan2016convergence}; \emph{(ii)} primal-dual methods, in which the consensus constraint is relaxed by introducing Lagrangian multipliers and the agents iterate over the primal and dual variables to seek a saddle point of the Lagrangian \cite{Nedic2007, Feijer2010, Kia2015, Wang2010, Aybat2016}; and \emph{(iii)} dual methods, in which the constraints are encoded into a dual problem and the agents seek the optimum of a dual function instead \cite{Lin2014, Zargham2011, Tutunov2016,8431049}.  The advantages of each of these methods over the others are determined by what assumptions are made about the objective function (smoothness, strong convexity, easily computable proximal operators, etc.), as well as how any additional constraints are encoded in the problem.

In general, distributed first-order algorithms where agents update their local states based exclusively on the gradients of the cost functions can become slow for problems with ill-conditioned linear constraints.  In particular, the convergence speed of these algorithms is strongly conditioned by the second smallest eigenvalue of the Laplacian matrix of the communication graph.  The performance of distributed algorithms can be improved by preconditioning the matrix representing the topology of the communication graph.  For example, in \cite{Giselsson2014} the authors successfully applied this preconditioning technique to a dual gradient method.  In the numerical analysis literature, it has been known that condition numbers of linear operators can also be improved by applying specifically scaled and shifted Chebyshev polynomials \cite{Auzinger2011}.  The first applications of this idea to a linear consensus constraint can be found in \cite{Montijano2011} and \cite{Cavalcante2011}, where the use of Chebyshev polynomials gave a prescription for averaging information received from an agent's neighbors over multiple rounds of communication.  It was shown that this results in faster consensus over traditional choices of gossip matrices.  One interpretation of this method is that, by decreasing the condition number of the gossip matrix, we effectively create a more favorable connectivity structure of the network.

In \cite{Scaman2017}, the authors combined this idea with an accelerated dual ascent to create the \emph{multi-step dual accelerated} (MSDA) method for solving smooth and strongly convex distributed optimization problems.  By preconditioning the gossip matrix with a particular Chebyshev polynomial, the authors demonstrate that the effective condition number of the network decreases, and faster convergence for dual ascent is achieved.  In practice, when this algorithm is implemented, the cost of applying a Chebyshev polynomial of degree $K$ to the gossip matrix is that each iteration of the algorithm now requires $K$ rounds of communication.  When the cost of communication is cheap compared to the cost of computing gradients, their algorithm performs well even with the extra rounds of communication.

While the performance advantages of MSDA are clear, the performance guarantees of the algorithm only apply in the case of smooth and strongly convex objective functions.  Further, there is the hidden complexity of the computation of the dual gradients which either must be done analytically or through an inner minimization.  In cases where an analytic expression is not possible or the inner minimization is costly, a primal-dual scheme might be preferable.  

In this paper, we propose a Chebyshev-Accelerated Primal-Dual method for solving a distributed optimization problem with an objective function of the form $\sum_i f_i + \sum_i g_i$, where each $f_i$ is smooth and convex with Lipschitz gradient, and each $g_i$ is convex and non-differentiable.  Our algorithm is based on the primal-dual method proposed in \cite{Chambolle2015}, and later extended in \cite{Aybat2016}, which was developed to minimize a similar composite objective function.  In Theorem \ref{theorem: convergence_thrm}, we show that the largest possible primal and dual step sizes for this method are inversely related to the largest eigenvalue of the Laplacian matrix.  This motivates preconditioning the Laplacian matrix to reduce the spectral radius and allow the use of larger step sizes.  To this end, we combine the primal-dual method with an accelerated gossip strategy with Chebyshev polynomials from \cite{Montijano2011, Cavalcante2011, Scaman2017}.  Our algorithm reaches consensus more quickly than the standard primal-dual algorithm, and results in faster convergence as measured by the number of gradient updates.  We illustrate the performance of this algorithm with a distributed signal recovery problem and find that when the cost of communications are low compared to the cost of computing gradients, there is a noticeable increase in the convergence rate to the optimal value.

\subsection{Preliminaries}

We denote the set of real numbers by $\R$, the set of real $d$-dimensional vectors by $\R^d$, the $d$-dimensional vector of 1's and 0's by $1_d$ and $0_d$ respectively, and the $d$-dimensional identity matrix by $\operatorname I_d$.  A graph is defined as $\mathcal{G} = (\mathcal V, \mathcal E)$, where $\mathcal V$ is the set of vertices, and the edge set $\mathcal E$ contains pairs of vertices $(i,j)$ such that $(i,j) \in \mathcal E$ if and only if vertices $i$ and $j$ are connected in the graph.  Define the neighborhood set of a vertex $\mathcal N_i = \{ \ell \in \mathcal V\;|\;(i,\ell) \in \mathcal E \}$.  We assume here that $| \mathcal{V} | = n$ and the graph $\mathcal G$ is simple, connected, and undirected. We define the adjacency matrix $A = [a_{ij}]$ of $\mathcal G$ as $a_{ij} = 1$ if $(i,j) \in \mathcal E$ and $a_{ij} = 0$ otherwise.  The Laplacian matrix of $\mathcal G$ is defined as $\operatorname{L} = \operatorname{diag}(A1_n) - A$.  The matrix $\operatorname L$ is symmetric and positive semidefinite with eigenvalues $0 = \lambda_1 \leq \lambda_2 \leq \ldots \leq \lambda_n$, where the eigenvalue $\lambda_1=0$ corresponds to the eigenvector $1_n$.  If we take the eigenvalue decomposition $\operatorname L = U \operatorname{diag}(0, \lambda_2,\ldots \lambda_n)U^\top$ where $U$ is orthogonal, we may form the matrix $\sqrt{\operatorname{L}} = U \operatorname{diag}(0, \sqrt{\lambda_2}, \ldots, \sqrt{\lambda_n})U^\top$.  By $\| \operatorname L \|_2$ we denote the $\ell_2$ matrix norm (which coincides with the spectral radius for symmetric matrices), and we denote the kernel of the matrix $\operatorname L$ as $\ker(\operatorname L) = \text{span}\{1_n\}$.  The Kronecker product between two matrices or vectors is denoted $A \otimes B$ or $v \otimes w$, respectively.  A differentiable function $f: \R^n \to \R$ has a $L_f$-Lipschitz gradient if, for all $x,y \in \R^d$, we have
$$\|\nabla f(x) - \nabla f(y)\| \leq L_f \|x - y\|.$$ 
For a non-differentiable $g \colon \mathbb{R}^d \to \mathbb{R} \cup \{\infty\}$ with $\mathrm{dom} \, g = \{x \in \mathbb{R}^d \colon g(x)<\infty\}$, the subdifferential set is defined as
\begin{align*}
\partial g(x) := \{\gamma \in \R^d \colon g(x)+\gamma^\top (y-x) \leq g(y), \forall y\in \mathrm{dom}\,g\}.
\end{align*}

\section{Problem Formulation}

Consider a network of $n$ agents connected via a communication graph, $\mathcal{G} = (\mathcal V, \mathcal E)$, where the edge set $\mathcal E$ represents communication links between agents.  Each agent has access to private cost functions $f_i : \R^d \to \R$ and $g_i:\R^d \to \R \cup \{+\infty\}$. The agents seek to collectively find an optimal solution to the minimization problem
\begin{alignat}{2} 
\underset{x \in \mathbb{R}^d}{\mathrm{minimize}} \quad  && \sum_{i=1}^n \{f_i(x) + g_i(x)\}, \label{eq: global_problem}
\end{alignat} 
in a \emph{distributed} fashion. We make the following assumptions on the functions $f_i$ and $g_i$. 
\begin{assumption}\label{assumption: f and g}
The functions $f_i \colon \mathbb{R}^d \to \mathbb{R}$ are closed, convex, proper, and have $L_i$-Lipschitz gradients.  The functions $g_i \colon \mathbb{R}^d \to \mathbb{R} \cup \{\infty\}$ are proper, convex, lower semi-continuous.  The proximal maps
\begin{align}
\mathbf{prox}_{\alpha g_i}(x) := \underset{x' \in \R^d}{\argmin\;} \bigg\{g_i(x) + \frac{1}{2\alpha}\|x - x'\|_2^2\bigg\}.
\end{align}
are easily computable.
\end{assumption}
Examples of functions with an easily computable proximal operator are $g(x) = \|x\|_1$, $g(x)=\|x\|_\infty$ \cite{parikh2014proximal,duchi2008efficient}. Further, if $g(x)=\mathbb{I}_\mathcal{X}(x)$ is the indicator function of the closed convex set $\mathcal{X} \subset \mathbb{R}^d$, the proximal operator becomes a projection operator onto $\mathcal{X}$. In this case, we assume that projection onto $\mathcal{X}$ is easy to compute (e.g. box constraints).

We can reformulate problem \eqref{eq: global_problem} to be amenable to a distributed solution by introducing a set of variables $x_i \in \R^d$ for $i \in \mathcal V$, where $x_i$ corresponds to a local copy of the global state variable $x$ for agent $i$.  Let $\mathbf{x} = (x_1^\top,\ldots,x_n^\top)^\top$ be the concatenation of the local copies of the state variable.  If we define a gossip matrix $\mathbf{L} = \operatorname L \otimes \operatorname{I}_d$, using the Laplacian matrix $\operatorname L$ of the communication graph, we can ensure consensus (i.e. $x_i = x_j$ for all $i,j \in \mathcal V$) by adding the constraint $\sqrt{\mathbf{L}} \mathbf x = 0$, as $\ker(\sqrt{\operatorname L} \otimes \operatorname{I}_d) = \text{span}\{1_n \otimes x\;|\;x \in \R^d\}$.  Thus, the problem in \eqref{eq: global_problem} is equivalent to
\begin{alignat}{2} 
&\underset{\bx \in \mathbb{R}^{nd}}{\mathrm{minimize}} \quad  && f(\bx) + g(\bx) \label{eq: distributed_problem} \\
&\text{subject to}  && \quad \sqrt{\mathbf{L}}\bx = 0, \nonumber
\end{alignat} 
where $f \colon \mathbb{R}^{nd} \to \mathbb{R}$ and $g \colon \mathbb{R}^{nd} \to \mathbb{R} \cup \{\infty\}$ are now
\begin{align}
f(\bx) := \sum_{i=1}^n f_i(x_i), \quad g(\bx) := \sum_{i=1}^n g_i(x_i).
\end{align}
We form the augmented Lagrangian for \eqref{eq: distributed_problem},
\begin{equation} \label{eq: lagrangian}
\mathcal{L}(\bx,\by) := f(\bx)+g(\bx) + \by^\top \sqrt{\mathbf{L}}\bx + \frac{\rho}{2}\bx^\top \mathbf{L} \bx,
\end{equation}
where $\by = (y_1^\top, \ldots, y_n^\top)^\top$ is the stacked vector of local multipliers and $\rho \geq 0$ is the augmentation parameter.  Without loss of generality, we may assume that $\by \in (\text{ker}\sqrt{\mathbf{L}})^\perp$, as for any $\tilde \by \in \text{ker}\sqrt{\mathbf{L}}$ we have $(\by + \tilde \by)^\top\sqrt{\bL} = \by^\top \sqrt{\bL}$.
Since problem \eqref{eq: distributed_problem} has linear constraints and the feasible set is nonempty, Slater's condition is satisfied and strong duality holds \cite{boyd2004convex}.  Hence, we can solve \eqref{eq: distributed_problem} by solving the following saddle point problem:
\begin{align}
\underset{\by \in (\text{ker}\sqrt{\mathbf{L}})^\perp }{\max}\;\underset{\bx \in \R^{nd}}{\min}\; \mathcal{L}(\bx,\by) \label{eq: saddle_problem}.
\end{align}
A solution of the saddle point problem is a primal dual pair $(\bx^\star, \by^\star)$ such that for all $(\bx, \by)$,
\begin{align}
\mathcal L(\bx^\star, \by) \leq \mathcal L(\bx,\by) \leq \mathcal L(\bx, \by^\star).
\end{align}
Note that $\mathcal L(\bx, \by^\star) - \mathcal L(\bx^\star, \by)$ approaches 0 as $(\bx,\by) $ approaches  $(\bx^\star, \by^\star)$.  The optimality condition for an optimal primal-dual pair $(\bx^\star,\by^\star)$ is
\begin{subequations}
\begin{align}
0 &= \nabla f(\bx^\star) + T_g (\bx^\star) + \sqrt{\bL} \by^\star, \label{eq: opt_condition1}\\ 
0 &= \sqrt{\bL} \bx^\star,\label{eq: opt_condition2}
\end{align}
\end{subequations}
where $T_g (\bx^\star)$ is any subgradient of $g$ evaluated at $\bx^\star$. Note that, by the second equation, we have consensus, $\bx^\star = \mathrm{1}_n \otimes x^\star $. Multiplying both sides of \eqref{eq: opt_condition1} from the left by $\mathrm{1}_n^\top \otimes \operatorname I_d $, and noticing that $(\mathrm{1}_n^\top \otimes \operatorname I_d ) \ \sqrt{\bL}= (\mathrm{1}_n^\top \otimes \operatorname I_d ) (\sqrt{\mathrm{L}} \otimes \operatorname I_d ) = (\mathrm{1}_n^\top \sqrt{\mathrm{L}}) \otimes (\operatorname I_d  \operatorname I_d )=\mathrm{0}_{n}^\top \otimes \operatorname I_d $, we obtain 
\begin{align*}
\sum_{i=1}^{n} \{\nabla f_i(x^\star)+T_{g_i}(x^\star)\}=0, \quad T_{g_i}(x^\star) \in \partial g_i(x^\star),
\end{align*}
which is the optimality condition for the centralized problem in \eqref{eq: global_problem}.  Hence, the solution of the saddle point problem for the decentralized problem in \eqref{eq: distributed_problem} provides a solution of the centralized problem \eqref{eq: global_problem}.

\section{A Distributed Solution}
%
%

\subsection{The Primal-Dual Algorithm}

We begin our exposition by presenting a primal-dual iterative algorithm proposed by Chambolle and Pock \cite[Algorithm 1]{Chambolle2015}, which is able to solve problems of the form \eqref{eq: saddle_problem}.  Writing $\nabla f(\bx) = (\nabla f_1(x_1)^\top, \ldots, \nabla f_n(x_n)^\top)^\top$, the iterations of this algorithm are given by
\begin{subequations} \label{eq: primal_dual_before 0}
	\begin{align}
	\hat\bx^{k+1} &= \bx^k -\alpha\big(\nabla f(\bx^k) + \sqrt{\mathbf{L}}\by^k + \rho \mathbf{L} \bx^k \big),  \label{eq: primal_dual_before xbar}\\
	\bx^{k+1} &= \argmin_{\bx} \left\{ g(\bx) \!+\! \frac{1}{2\alpha}\|\bx - \hat\bx^{k+1}\|_2^2 \right\},  \label{eq: primal_dual_before x}\\
	\by^{k+1} &= \by^k + \beta\sqrt{\mathbf{L}}(2\bx^{k+1}-\bx^k) \label{eq: primal_dual_before y},
	\end{align}
\end{subequations}
where we have interpreted the augmentation term as being absorbed into the smooth part of the objective function.  The parameters $\alpha, \beta >0$ are the primal and dual step sizes, respectively,  The first two updates constitute a proximal gradient step applied to the Lagrangian in the primal domain, while the third recursion is a gradient ascent-like step in the dual domain.  

The following theorem shows that under appropriate selection of the step sizes, the iterates in \eqref{eq: primal_dual_before 0} achieve an ergodic $O(1/N)$ convergence rate.  An important fact for our development is that the maximum possible step sizes for convergence are inversely related to the largest eigenvalue of the Laplacian matrix, as stated below.
\begin{theorem} \label{theorem: convergence_thrm}
Consider the optimization problem in \eqref{eq: distributed_problem}, where $f_i$ and $g_i$ satisfy Assumption \ref{assumption: f and g}. Let the step sizes $\alpha$ and $\beta$ for the recursions in \eqref{eq: primal_dual_before 0} satisfy
\begin{align} \label{eq: step_size}
\frac{1}{\beta+\rho}\bigg(\frac{1}{\alpha} - L_f\bigg) \geq \lambda_{n},
\end{align}
where $L_f = \max_i L_i$. Let $\{(\bx^k,\by^k)\}_{k=1}^N$ be the sequence of points produced by Algorithm \ref{Algorithm: Vanilla_PD} with $\bx^0 \in \ker \sqrt{\mathbf{L}}$ and $\by^0 \in \ker(\sqrt{\mathbf{L}})^\perp$.  Then we have
\begin{align} 
\mathcal{L}(\bar \bx^N,\by^\star) \!-\! \mathcal{L}(\bx^\star,\bar \by^N)\! \leq \! \frac{1}{N}\bigg(\!\frac{\|\bx^\star\! -\! \bx^0\|_2}{2\alpha} \!+\! \frac{\|\by^\star \!-\! \by^0\|_2}{2\beta}\!\bigg), \label{eq: convergence_rate}
\end{align}
where $\bar \bx^N = \frac{1}{N} \sum_{k=1}^N \bx^k$ and $\bar \by^N = \frac{1}{N} \sum_{k=1}^N \by^k$.
\end{theorem}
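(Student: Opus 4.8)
The plan is to follow the ergodic analysis of the Chambolle--Pock primal-dual method, adapted to the present splitting in which $f$ plays the role of the smooth term (with $L_f$-Lipschitz gradient), $g$ is the nonsmooth term handled through its proximal map, $\sqrt{\bL}$ is the linear coupling operator, and $\frac{\rho}{2}\bx^\top\bL\bx$ is an additional convex quadratic. The central object is the per-iteration primal-dual gap $\mathcal{L}(\bx^{k+1},\by)-\mathcal{L}(\bx,\by^{k+1})$ evaluated at an arbitrary reference pair $(\bx,\by)$, which I aim to bound by a telescoping quantity $D_k(\bx,\by)-D_{k+1}(\bx,\by)$ with $D_k(\bx,\by)=\frac{1}{2\alpha}\|\bx-\bx^k\|_2^2+\frac{1}{2\beta}\|\by-\by^k\|_2^2$, minus a nonnegative remainder.

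First I would extract the variational inequalities implied by the two updates in \eqref{eq: primal_dual_before 0}. The proximal step \eqref{eq: primal_dual_before x} yields a subgradient $\xi^{k+1}\in\partial g(\bx^{k+1})$ with $\xi^{k+1}=\tfrac{1}{\alpha}(\bx^k-\bx^{k+1})-\nabla f(\bx^k)-\sqrt{\bL}\by^k-\rho\bL\bx^k$, while the dual step \eqref{eq: primal_dual_before y} gives $\by^{k+1}-\by^k=\beta\sqrt{\bL}(2\bx^{k+1}-\bx^k)$. I would then bound the objective differences: convexity of $g$ gives $g(\bx^{k+1})-g(\bx)\le\langle\xi^{k+1},\bx^{k+1}-\bx\rangle$, and combining the descent lemma for $f$ at $\bx^{k+1}$ with the convexity inequality for $f$ at $\bx$ gives $f(\bx^{k+1})-f(\bx)\le\langle\nabla f(\bx^k),\bx^{k+1}-\bx\rangle+\tfrac{L_f}{2}\|\bx^{k+1}-\bx^k\|_2^2$. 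The gradient terms $\nabla f(\bx^k)$ cancel after substituting $\xi^{k+1}$, leaving the primal proximal term $\tfrac{1}{\alpha}\langle\bx^k-\bx^{k+1},\bx^{k+1}-\bx\rangle$ (which I expand with the three-point identity into $D$-differences) together with the coupling and augmentation contributions, the latter handled by the exact Bregman identity for the quadratic $\frac{\rho}{2}\langle\cdot,\bL\cdot\rangle$.

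The heart of the argument is reorganizing the coupling terms. Collecting everything linear in $\sqrt{\bL}$ produces $\langle\by-\by^k,\sqrt{\bL}\bx^{k+1}\rangle-\langle\by^{k+1}-\by^k,\sqrt{\bL}\bx\rangle$; substituting the dual identity and applying the three-point identity to $\langle\by^{k+1}-\by^k,\by-\by^{k+1}\rangle$ converts these into the dual $D$-differences plus a cross term pairing $\sqrt{\bL}(\bx^{k+1}-\bx^k)$ with the dual increment $\by^{k+1}-\by^k$, where the extrapolation $2\bx^{k+1}-\bx^k$ in \eqref{eq: primal_dual_before y} is exactly what cancels the troublesome $\langle\sqrt{\bL}\bx^{k+1},\cdot\rangle$ contributions across consecutive iterations. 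Bounding this cross term with Young's inequality and $\|\sqrt{\bL}v\|_2^2\le\lambda_n\|v\|_2^2$ produces a $\tfrac{\beta}{2}\lambda_n\|\bx^{k+1}-\bx^k\|_2^2$ contribution, which together with the augmentation's $\tfrac{\rho}{2}\langle\bx^{k+1}-\bx^k,\bL(\bx^{k+1}-\bx^k)\rangle\le\tfrac{\rho}{2}\lambda_n\|\bx^{k+1}-\bx^k\|_2^2$ must be dominated by the proximal term $\tfrac{1}{2\alpha}\|\bx^{k+1}-\bx^k\|_2^2$ net of the smoothness loss $\tfrac{L_f}{2}\|\bx^{k+1}-\bx^k\|_2^2$. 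Here is exactly where the step-size condition \eqref{eq: step_size}, rewritten as $\tfrac{1}{\alpha}-L_f\ge(\beta+\rho)\lambda_n$, forces the residual $\big(\tfrac{1}{2\alpha}-\tfrac{L_f}{2}-\tfrac{\beta+\rho}{2}\lambda_n\big)\|\bx^{k+1}-\bx^k\|_2^2$ to be nonnegative, giving the per-step bound $\mathcal{L}(\bx^{k+1},\by)-\mathcal{L}(\bx,\by^{k+1})\le D_k(\bx,\by)-D_{k+1}(\bx,\by)$.

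Finally I would sum this inequality over $k=0,\dots,N-1$, so the $D_k$ differences telescope and all but $D_0(\bx,\by)$ and a sign-definite boundary term (controlled using the initialization $\bx^0\in\ker\sqrt{\bL}$, $\by^0\in(\ker\sqrt{\bL})^\perp$) are discarded. Dividing by $N$ and invoking convexity of $\mathcal{L}(\cdot,\by)$ and concavity of $\mathcal{L}(\bx,\cdot)$ (Jensen's inequality on the running averages $\bar\bx^N,\bar\by^N$) yields $\mathcal{L}(\bar\bx^N,\by)-\mathcal{L}(\bx,\bar\by^N)\le\tfrac{1}{N}D_0(\bx,\by)$; specializing $(\bx,\by)=(\bx^\star,\by^\star)$ gives the claimed rate \eqref{eq: convergence_rate}. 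I expect the main obstacle to be the bookkeeping in the third step --- correctly pairing the extrapolated dual step with the coupling terms and tracking the boundary term left by the telescoped cross terms --- since this is precisely where the extrapolation and the step-size bound interlock; the remaining convexity estimates and the telescoping are routine.
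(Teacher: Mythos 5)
Your plan is correct, but it is not the paper's argument: it is a first-principles reconstruction of the machinery the paper imports. The paper does not redo the Chambolle--Pock-type ergodic analysis at all; it absorbs the augmentation into the smooth part (so the relevant Lipschitz constant becomes $L_F = L_f + \rho\lambda_n$), invokes Theorem 1.1 of \cite{Aybat2016} --- which delivers the ergodic bound \eqref{eq: Chambolle convergence} whenever the block matrix in \eqref{eq: Q condition} is positive semidefinite --- reduces that matrix condition via a Schur complement to $\tfrac{1}{\beta}\big(\tfrac{1}{\alpha}-L_F\big)\ge\lambda_n$, which rearranges to exactly \eqref{eq: step_size}, and finally kills the residual cross term $(\by^\star-\by^0)^\top\sqrt{\bL}(\bx^\star-\bx^0)$ using $\bx^0,\bx^\star\in\ker\sqrt{\bL}$. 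Your telescoping derivation is essentially a proof of the cited theorem in the special case at hand: your residual coefficient $\tfrac{1}{2\alpha}-\tfrac{L_f}{2}-\tfrac{(\beta+\rho)\lambda_n}{2}\ge 0$ is precisely the scalarized Schur-complement condition, and your boundary cross term plays the role of the $(\by-\by^0)^\top\sqrt{\bL}(\bx-\bx^0)$ term in \eqref{eq: Chambolle convergence}. Your Bregman treatment of the quadratic $\tfrac{\rho}{2}\bx^\top\bL\bx$, which produces the exact remainder $\tfrac{\rho}{2}\|\sqrt{\bL}(\bx^{k+1}-\bx^k)\|_2^2$ before bounding it by $\tfrac{\rho\lambda_n}{2}\|\bx^{k+1}-\bx^k\|_2^2$, is marginally sharper than the paper's blanket absorption of $\rho\lambda_n$ into $L_F$, though it collapses to the same step-size condition. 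What your route buys is self-containedness and transparency about where the extrapolation $2\bx^{k+1}-\bx^k$ and the step-size inequality interlock; what the paper's route buys is brevity, plus inheritance from a theorem proved in a more general setting (with additional conic constraints). One dividend of your derivation worth flagging: the telescoped quantity $D_0$ involves \emph{squared} distances, so the bound should read $\|\bx^\star-\bx^0\|_2^2/(2\alpha)+\|\by^\star-\by^0\|_2^2/(2\beta)$; the unsquared norms in \eqref{eq: convergence_rate} and \eqref{eq: Chambolle convergence} appear to be a typographical slip in the paper that your argument would correct.
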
 
\begin{proof}
First note that $L_f = \max \{L_1, \ldots, L_n\}$ is an upper bound for the Lipschitz constant for $\nabla f(\bx)$.  Indeed, 
\begin{align}
\big\| \nabla f(\bx) - \nabla f(\by) \big\|_2^2 &= \sum_{i=1}^n \| \nabla f_i(x_i) - \nabla f_i(y_i)\|_2^2 \nonumber \\ 
&\leq \sum_{i=1}^n L_i^2 \|x_i - y_i\|_2^2 \\
&\leq L_f^2 \| \bx - \by \|_2^2. \nonumber
\end{align} 
Again, we will interpret the augmentation term as being absorbed into the smooth term of the objective function.  Thus, an upper bound for the Lipschitz constant for $\nabla\big(f(\bx)+ (\rho/2) \bx^\top \mathbf{\operatorname L} \bx\big)$ is $L_F := L_f + \rho \lambda_{n}$.  

To prove the theorem we appeal to \cite[Theorem 1.1]{Aybat2016}, where the problem in question is similar to this paper, but with an additional private cone constraint for each agent.  Since we have no such constraints, in our scenario the theorem states if 
\begin{align}\label{eq: Q condition}
\begin{bmatrix} \big(\frac{1}{\alpha} - L_F\big)\operatorname{I}_{nd} & 0 & -\sqrt{\mathbf{L}} \\ 0 & 0 & \operatorname{0} \\ -\sqrt{\mathbf{L}} & \operatorname{0} & \frac{1}{\beta}\operatorname{I}_{nd}\end{bmatrix} \succeq 0, 
\end{align}
then we have for all $(\bx,\by)$
\begin{align} \label{eq: Chambolle convergence}
\mathcal{L}(\bar \bx^N,\by) \!-\! \mathcal{L}(\bx,\bar \by^N) \leq  \frac{1}{N}\bigg(&\!\frac{1}{2\alpha}\|\bx - \bx^0\|_2 \!+\! \frac{1}{2\beta}\|\by - \by^0\|_2 \nonumber \\
& + (\by - \by^0)^\top \sqrt{\mathbf{L}} (\bx - \bx^0)\bigg).
\end{align}
From the Schur complement, \eqref{eq: Q condition} is equivalent to the condition that $\big(\frac{1}{\alpha} - L_F\big)\operatorname{I}_{nd} - \beta \mathbf{L} \succeq 0$, or
\begin{align}
\frac{1}{\beta}\bigg(\frac{1}{\alpha} - L_F\bigg)\operatorname{I}_{nd} \succeq \mathbf L, \label{eq: Schur_condition}
\end{align}
Since the $\mathbf{L}$ and $\operatorname L$ share the same eigenvalues, and the largest eigenvalue of $\operatorname L$ is equal to $\lambda_{n}$, if 
\begin{equation} \label{eq: old_step_size}
\frac{1}{\beta}\bigg(\frac{1}{\alpha} - L_F\bigg)\geq \lambda_{n}
\end{equation}
then $\eqref{eq: Schur_condition}$ and in turn \eqref{eq: Q condition} are satisfied, which by Theorem 1.1 of \cite{Aybat2016} implies \eqref{eq: Chambolle convergence}.  Recalling that $L_F = L_f + \rho \lambda_{n}$, a simple rearrangement of \eqref{eq: old_step_size} gives \eqref{eq: step_size}.

If we choose $\bx^0$ such that $\sqrt{\mathbf{L}}\bx^0 = 0$ (i.e., initialize $\bx^0$ to have consensus) and take $(\bx,\by) = (\bx^\star, \by^\star)$ in \eqref{eq: Chambolle convergence}, then $\sqrt{\mathbf{L}}(\bx^\star - \bx^0) = 0$, and we conclude
$$\mathcal{L}(\bar \bx^N,\by^\star) -\! \mathcal{L}(\bx^\star,\bar \by^N) \!\leq \! \frac{1}{N}\bigg(\!\frac{\|\bx^\star - \bx^0\|_2}{2\alpha} + \frac{\|\by^\star - \by^0\|_2}{2\beta}\bigg),$$
as desired.
\end{proof}

\begin{remark}
If we do not require $\bx_0 \in \ker \sqrt{\mathbf{L}}$, the convergence rate holds with an additional constant factor of $(\by^\star - \by_0)^\top\sqrt{\mathbf{L}}(\bx^\star - \bx_0)$ in the parenthesis in \eqref{eq: convergence_rate}. 
\end{remark}

\begin{corollary}\label{corollary: x_rate}
If the dual step size is chosen as 
\begin{align}
\beta &= \frac{1}{\lambda_{n}}\bigg(\frac{1}{\alpha} - L_f \bigg) - \rho,
\end{align}
then we have the following ergodic convergence rate toward consensus
\begin{align} \label{eq: primal_rate}
\|\bar \bx^N - \bar \bx_C^N\|_2^2 \leq& \frac{1}{N}\bigg[ \frac{L_f + \rho \lambda_{n}}{\lambda_{2}}\|\bx^0 - \bx^\star\|_2 \nonumber \\
&+ \frac{\lambda_{n}}{\lambda_{2}} \frac{1}{L_f + \rho \lambda_{n}}\|\by^0 - \by^\star\|_2\bigg],
\end{align}
where 
$$\bar \bx_C^N = \sum_{i=1}^d(1 \otimes e_i)^\top \bar \bx^N(1 \otimes e_i),$$
is the projection of $\bar \bx^N$ onto the consensus subspace.
\end{corollary}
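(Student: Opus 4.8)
The plan is to pass from the consensus error $\|\bar\bx^N - \bar\bx_C^N\|_2^2$ to the feasibility residual $\|\sqrt{\bL}\bar\bx^N\|_2^2$ using the spectral gap of the Laplacian, and then to bound that residual through the saddle-point inequality \eqref{eq: Chambolle convergence} already established inside the proof of Theorem \ref{theorem: convergence_thrm}.

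First I would observe that the consensus subspace is exactly $\ker\sqrt{\bL}$, so that $\bar\bx^N - \bar\bx_C^N$ is the orthogonal projection of $\bar\bx^N$ onto $(\ker\sqrt{\bL})^\perp$. Since $\bL = \operatorname{L}\otimes\operatorname I_d$ has smallest nonzero eigenvalue $\lambda_2$, the Rayleigh-quotient bound restricted to $(\ker\sqrt{\bL})^\perp$ gives $\lambda_2\|\bar\bx^N - \bar\bx_C^N\|_2^2 \le (\bar\bx^N)^\top \bL\,\bar\bx^N = \|\sqrt{\bL}\bar\bx^N\|_2^2$, where I used $\bL\bar\bx_C^N = 0$. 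Hence it suffices to show $\|\sqrt{\bL}\bar\bx^N\|_2^2 \le \tfrac1N\big[\,L_F\|\bx^0-\bx^\star\|_2 + (\lambda_n/L_F)\|\by^0-\by^\star\|_2\,\big]$ with $L_F = L_f + \rho\lambda_n$, and then divide through by $\lambda_2$.

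To bound the residual I would exploit the freedom in the dual test point of \eqref{eq: Chambolle convergence}. Specializing that inequality to $\bx = \bx^\star$ and using $\bx^0,\bx^\star\in\ker\sqrt{\bL}$ annihilates the cross term $(\by-\by^0)^\top\sqrt{\bL}(\bx^\star-\bx^0)$, leaving a bound valid for every admissible $\by$. I would then evaluate it at $\by = \by^\star + c\,\sqrt{\bL}\bar\bx^N$ for a scalar $c>0$, which is admissible because $\sqrt{\bL}\bar\bx^N\in(\ker\sqrt{\bL})^\perp$. The coupling term $\by^\top\sqrt{\bL}\bar\bx^N$ then contributes a clean $c\,\|\sqrt{\bL}\bar\bx^N\|_2^2$, while the remaining gap $\mathcal{L}(\bar\bx^N,\by^\star) - \mathcal{L}(\bx^\star,\bar\by^N)$ is nonnegative: by the optimality condition \eqref{eq: opt_condition1}, $\bx^\star$ minimizes $\bx\mapsto\mathcal{L}(\bx,\by^\star)$, and since $\sqrt{\bL}\bx^\star = \bL\bx^\star = 0$ the map $\by\mapsto\mathcal{L}(\bx^\star,\by)$ is constant, so $\mathcal{L}(\bar\bx^N,\by^\star)\ge\mathcal{L}(\bx^\star,\by^\star)=\mathcal{L}(\bx^\star,\bar\by^N)$. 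Thus the left side of \eqref{eq: Chambolle convergence} is at least $c\,\|\sqrt{\bL}\bar\bx^N\|_2^2$, which is the quadratic I want.

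Finally I would substitute the prescribed step size. The choice $\beta = \tfrac{1}{\lambda_n}(\tfrac1\alpha - L_f) - \rho$ is exactly the equality case of \eqref{eq: step_size}, i.e. $\tfrac1\alpha = L_F + \beta\lambda_n$ with $L_F = L_f+\rho\lambda_n$, and I would use this identity to eliminate $1/\alpha$ and $1/\beta$ in favor of $L_F$ and $\lambda_n$, choosing $c$ to balance the $\bx^0$- and $\by^0$-terms into the coefficients $L_F$ and $\lambda_n/L_F$ before dividing by $\lambda_2$. I expect the main obstacle to be this second step: the gap \eqref{eq: Chambolle convergence} controls a mixture of objective-suboptimality and infeasibility, so isolating a pure quadratic in $\|\sqrt{\bL}\bar\bx^N\|_2^2$ requires both the nonnegativity argument above and careful handling of the self-referential term $\|\by-\by^0\|_2 \le \|\by^\star-\by^0\|_2 + c\,\|\sqrt{\bL}\bar\bx^N\|_2$, which reappears on the right-hand side as a lower-order $O(1/N)$ contribution that must be absorbed to leave the stated leading term.
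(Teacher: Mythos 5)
Your proposal is correct, but it takes a genuinely different route from the paper's proof. The paper never introduces the feasibility residual $\|\sqrt{\bL}\bar\bx^N\|_2^2$ or a shifted dual test point: it expands $\mathcal{L}(\bar\bx^N,\by^\star)-\mathcal{L}(\bx^\star,\bar\by^N)$ directly, uses the optimality conditions \eqref{eq: opt_condition1}--\eqref{eq: opt_condition2} to rewrite $(\by^\star)^\top\sqrt{\bL}\bar\bx^N$ as $-(\nabla f(\bx^\star)+T_g(\bx^\star))^\top(\bar\bx^N-\bx^\star)$, discards the two resulting Bregman-type terms $A,B\ge 0$ (convexity of $f$ and $g$), and keeps only the augmentation term, bounding $\frac{\rho}{2}(\bar\bx^N)^\top\bL\,\bar\bx^N\ \ge\ \frac{\rho\lambda_2}{2}\|\bar\bx^N-\bar\bx_C^N\|_2^2$ by the same Rayleigh-quotient fact you invoke; combining with \eqref{eq: convergence_rate} evaluated at the fixed pair $(\bx^\star,\by^\star)$ finishes the argument. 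Thus the paper's mechanism lives entirely in the quadratic penalty and requires $\rho>0$: the constant it actually produces carries a $\frac{1}{\rho\lambda_2}$ prefactor and degenerates as $\rho\to 0$. Your mechanism instead exploits the uniformity of \eqref{eq: Chambolle convergence} over dual test points via $\by=\by^\star+c\,\sqrt{\bL}\bar\bx^N$ --- the standard trick for extracting a feasibility bound from an ergodic gap --- and your nonnegativity step (optimality of $\bx^\star$ for $\mathcal{L}(\cdot,\by^\star)$ plus constancy of $\mathcal{L}(\bx^\star,\cdot)$) is a repackaging of the same convexity facts the paper uses to drop $A$ and $B$. What your route buys: it remains valid at $\rho=0$, and if the norms in \eqref{eq: Chambolle convergence} are read as squared (as in the Chambolle--Pock source), optimizing over $c$ even yields $\|\sqrt{\bL}\bar\bx^N\|_2^2=O(1/N^2)$, strictly better than \eqref{eq: primal_rate}. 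Its costs: you need the full ``for all $(\bx,\by)$'' inequality from inside the proof of Theorem \ref{theorem: convergence_thrm} rather than the specialized statement \eqref{eq: convergence_rate}, and absorbing the self-referential term $c\|\sqrt{\bL}\bar\bx^N\|_2$ perturbs the constants, so you recover \eqref{eq: primal_rate} only up to constant factors and an $O(1/N^2)$ remainder --- though the paper's own derivation also does not literally reproduce the displayed coefficients (it leaves the $1/\rho$ factor that \eqref{eq: primal_rate} omits), so neither argument matches the stated constants exactly.
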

\begin{proof}
From the definition of the augmented Lagrangian, \eqref{eq: lagrangian}, we write
\begin{align}
\mathcal{L}(\bar \bx^N,\by) \!-\! \mathcal{L}(\bx,\bar \by^N) =& f(\bar \bx^N) \!-\! f(\bx^\star) + g(\bar \bx^N) \!-\! g(\bx^\star) \nonumber\\
 &+ \by^\star \sqrt{\mathbf{L}}\bar\bx^N + (\rho/2)(\bar \bx^N)^\top \mathbf{L} \bar \bx^N
 \end{align}
 Using \eqref{eq: opt_condition1} and \eqref{eq: opt_condition2}, we have the relation
 $$\by^\star \sqrt{\mathbf{L}}\bar\bx^N = -(\nabla f(\bx^\star)+T_g(\bx^\star))^\top(\bar\bx^N \!- \!\bx^\star)$$
 Therefore we may write
\begin{align}
\mathcal{L}(\bar \bx^N,\by) \!-\! \mathcal{L}(\bx,\bar \by^N) =& A + B + C,
\end{align}
where
\begin{subequations}
\begin{align}
A &= f(\bar \bx^N) \!-\! f(\bx^\star) \!- \!\nabla f(\bx^\star)^\top(\bar\bx^N \!- \!\bx^\star) \\ 
B &= g(\bar \bx^N) \!- \!g(\bx^\star) \!- \!T_g(\bx^\star)^\top(\bar\bx^N \!- \!\bx^\star) \\
C &= \frac{\rho}{2}(\bar \bx^N)^\top \mathbf{L} \bar \bx^N
\end{align}
\end{subequations}
By the convexity of $f$ and $g$, both $A,B \geq 0$.  Since $\ker \mathbf{L} = \{ 1 \otimes x\;|\;x \in \R^d\}$, the value of $(\bar \bx^N)^\top \mathbf{L} \bar \bx^N$ only depends on the displacement of $\bar \bx^N$ to its projection onto the consensus subspace.  Since $\mathbf{L}$ has a trivial kernel on this subspace, we may make the lower bound,
$$C = \frac{\rho}{2}(\bar \bx^N)^\top \mathbf{L} \bar \bx^N \geq \frac{\rho \lambda_2(\operatorname L)}{2} \| \bar \bx^N - \bar \bx_C^N\|_2^2.$$
Hence we have proven
$$\mathcal{L}(\bar \bx^N,\by) \!-\! \mathcal{L}(\bx,\bar \by^N)\geq \frac{\rho \lambda_2(\operatorname L)}{2} \| \bar \bx^N - \bar \bx_C^N\|_2^2.$$
The result now follows from combining this bound with \eqref{eq: convergence_rate}.
\end{proof}

Although the Laplacian matrix $\mathbf L$ preserves the sparsity pattern of the communication graph, the matrix $\sqrt{\mathbf L}$ does not; hence, the iterations in \eqref{eq: primal_dual_before 0} cannot be implemented in a distributed fashion.  To move to a fully distributed implementation, we make the change of variables $\bnu = \sqrt{\mathbf{L}}\by + \rho \mathbf{L} \bx$.  The iterations in \eqref{eq: primal_dual_before 0} can now be written in terms of the new variable $\bnu$, as follows:
\begin{subequations} \label{eq: primal_dual 0}
	\begin{align}
	\hat\bx^{k+1} &= \bx^k -\alpha\big(\nabla f(\bx^k) + \bnu^k\big),  \label{eq: primal_dual xbar}\\
	\bx^{k+1} &= \argmin_{\bx} \bigg\{g(\bx) + \frac{1}{2\alpha}\|\bx - \hat\bx^{k+1}\|_2^2\bigg\}.  \label{eq: primal_dual x}\\
	\bnu^{k+1} &= \bnu^k + \mathbf L \big((\rho + 2\beta)\bx^{k+1}-(\rho + \beta)\bx^k\big) \label{eq: primal_dual nu}.
	\end{align}
\end{subequations}

Here, we see why $\sqrt{\mathbf L}$ was chosen to create the consensus constraint instead of $\mathbf L$; we have been able to decouple the primal update from the graph structure and only the dual update must use communication between agents.  The update \eqref{eq: primal_dual nu} requires agents to know the states of their neighbors and the updates \eqref{eq: primal_dual xbar} and \eqref{eq: primal_dual x} can be performed by every agent with no communication at all.  Therefore, we may distribute the updates across individual agents as outlined in the following algorithm.

\begin{algorithm}[h]
	{\footnotesize \vspace*{1ex}
		\textbf{Given}: $f = \sum_{i=1}^n f_i$, where $f$ has $L_f$-Lipschitz gradient, $g = \sum_{i=1}^n g_i$.  An undirected and connected communication graph with Laplacian matrix $\operatorname L = [\operatorname L_{ij}]$, step sizes $0<\alpha,\beta$, augmentation parameter $\rho > 0$.
		\begin{algorithmic}[1]
			\State Initialize at $x_i^0$ such that $x_i^0 = x_j^0$ for all $i,j =1,\ldots,n$, initialize $\nu_i^0$ such that $\sum_i \nu_i^0 = 							0$			
			\For {$k=0,1,2,\cdots$ all agents}
			\State 
			$
			\hat x_i^{k+1} = x_i^k -\alpha\big(\nabla f_i(x_i^k) + \nu_i^k\big).
			$\vspace{.3 mm}
			\State
			$
			x_i^{k+1} = \argmin_x \{g_i(x) + \frac{1}{2\alpha}\|x - \hat x_i^{k+1}\|_2^2\}
			$\vspace{.3 mm}
			\State $\nu_i^{k+1}  = \nu_i^k + \sum_{\ell \in \mathcal{N}_i} \operatorname L_{i\ell}\big((\rho+2\beta)x_\ell^{k+1}-(\rho + \beta)x_\ell^k\big).$
			\EndFor
	\end{algorithmic}}
	\caption{\hspace*{-.5ex} Distributed Primal-Dual Algorithm} \label{Algorithm: Vanilla_PD}
\end{algorithm}

Note that since Theorem 1 assumes we initialize $\bx^0$ to have consensus, and we assume $\by^0 \in (\ker \sqrt{\mathbf L})^\perp$, we have $\bnu^0 = \sqrt{\mathbf{L}}\by \in (\ker \sqrt{\mathbf L})^\perp$, or equivalently, $\sum_i \nu_i^0 = 0$.

Theorem 1 and Corollary show the dependence of the convergence rate toward optimality on the largest eigenvalue $\lambda_{n}$ and the dependence on the convergence rate toward consensus on the condition number $\lambda_{n}/\lambda_{2}$ (see \eqref{eq: step_size} and \eqref{eq: primal_rate}).  It is here that we draw inspiration from \cite{Scaman2017} and in the next section, apply Chebyshev polynomials to $\mathbf{L}$ in order to increase the largest eigenvalue, reduce the condition number, and achieve a faster ergodic convergence rate.

\subsection{The Preconditioned Primal-Dual Algorithm}

The Chebyshev polynomials of the first kind are defined by the recurrence relation,
$$T_0(x) = 1, \quad T_1(x) = x, \quad T_k(x) = 2xT_{k-1}(x) - T_{k-2}(x),$$
and have the property \cite{Auzinger2011},
\begin{align}
\underset{p \in \mathbb{P}_k, p(\gamma) = 1}{\argmin}\;\underset{t \in [\alpha,\beta]}{\max} |p(t)| = \frac{1}{T_k\big(1 + \frac{\gamma - \beta}{\beta - \alpha}\big)}T_k\bigg(1 + \frac{t - \beta}{\beta - \alpha}\bigg), \label{eq: cheby_prop}
\end{align}
where $\mathbb{P}_k$ is the set of monic polynomials of degree $k$ with real coefficients.  In other words, the Chebyshev polynomials can be used to construct polynomials with minimal $\ell^\infty$ norms over intervals.  We will use a polynomial which minimizes the maximum distance of the nonzero eigenvalues of $\mathbf L$ to 1, while preserving the eigenvalue at 0,
\begin{align}
\underset{p \in \mathbb{P}_k, p(0) = 0}{\argmin}\;\underset{t \in [\lambda_2,\lambda_n]}{\max} |1 - p(t)|. \label{eq: cheby_cond_prop}
\end{align}
Such a polynomial when applied to $\mathbf L$ will cluster all of the eigenvalues closer to 1, thus reducing the condition number.  This approach is used in \cite{Scaman2017}, where it is shown that for a scaled version of the Laplacian, $c_2\mathbf{L}$, where $c_2 = \frac{2}{(1 + \frac{\lambda_2}{\lambda_n})\lambda_n}$, the relation \eqref{eq: cheby_prop} allows one to find that 
\begin{align}
\underset{p \in \mathbb{P}_k, p(0) = 0}{\argmin}\;\;\underset{t \in [1 - c_1^{-1}, 1 + c_1^{-1}]}{\max} |1 \!- \!p(t)|\! = \!1\! -\! \frac{T_K(c_1(1-x))}{T_K(c_1)}, 
\end{align}
where $c_1 = (1 + \frac{\lambda_2}{\lambda_n})/(1 - \frac{\lambda_2}{\lambda_n})$ and the nonzero eigenvalues of $c_2 \mathbf L$ lie in $[1 - c_1^{-1}, 1 + c_1^{-1}]$.  We similarly propose to precondition the Laplacian matrix with the polynomial,
\begin{align}
P_k(x) = 1 - \frac{1}{T_k(c_1)}T_k\big(c_1(1-x)\big).
\end{align}
We verify that the matrix $P_k(c_2\mathbf{L})$ will still create consensus among agents.  The proof is deferred to the appendix.
\begin{proposition}\label{proposition: kernel_check}
The preconditioned gossip matrix $P_k(c_2 \mathbf L)$ satisfies $\ker P_k(c_2 \mathbf L) = \operatorname{span}\{1_n \otimes x\;|\;x \in \R^d\}$.
\end{proposition}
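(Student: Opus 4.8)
The plan is to exploit the fact that $P_k(c_2\mathbf{L})$ is a polynomial in the symmetric matrix $\mathbf{L} = \operatorname{L}\otimes\operatorname{I}_d$, so it is simultaneously diagonalizable with $\mathbf{L}$ and its kernel is spanned precisely by those eigenvectors of $\mathbf{L}$ whose eigenvalue $\mu$ satisfies $P_k(c_2\mu)=0$. Thus the entire statement reduces to a scalar question about $P_k$ evaluated at the scaled eigenvalues $c_2\lambda_1,\dots,c_2\lambda_n$: I must show that $P_k$ annihilates the scaled zero eigenvalue and nothing else.

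First I would record that $\mathbf{L}$ has the same eigenvalues as $\operatorname{L}$, namely $0=\lambda_1\le\lambda_2\le\cdots\le\lambda_n$, each repeated $d$ times, and that the eigenspace of $\mathbf{L}$ for the eigenvalue $0$ is exactly $\ker(\operatorname{L})\otimes\R^d = \operatorname{span}\{1_n\}\otimes\R^d = \operatorname{span}\{1_n\otimes x : x\in\R^d\}$, using $\ker(\operatorname{L})=\operatorname{span}\{1_n\}$ from the connectedness assumption. A direct substitution gives $P_k(0) = 1 - T_k(c_1)/T_k(c_1) = 0$, so this entire eigenspace lies in $\ker P_k(c_2\mathbf{L})$; this yields the inclusion $\supseteq$.

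The main work is the reverse inclusion, namely showing $P_k(c_2\lambda_j)\neq 0$ for every $j\ge 2$, so that no eigenvector attached to a nonzero Laplacian eigenvalue sneaks into the kernel. Here I would invoke the stated fact that the nonzero eigenvalues of $c_2\mathbf{L}$ lie in $[1-c_1^{-1},\,1+c_1^{-1}]$, which means $c_1(1-c_2\lambda_j)\in[-1,1]$ for each $j\ge 2$. On $[-1,1]$ the Chebyshev polynomial obeys $|T_k(t)|\le 1$ (from $T_k(\cos\theta)=\cos k\theta$), whereas for $c_1>1$ one has $T_k(c_1)>1$ (from $T_k(\cosh t)=\cosh kt$, which is strictly increasing beyond $1$). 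Consequently $P_k(c_2\lambda_j) = 1 - T_k(c_1(1-c_2\lambda_j))/T_k(c_1) \ge 1 - 1/T_k(c_1) > 0$, so each such value is strictly positive and the corresponding eigenvectors are excluded from the kernel.

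Combining the two inclusions gives $\ker P_k(c_2\mathbf{L}) = \operatorname{span}\{1_n\otimes x : x\in\R^d\}$. I expect the one delicate point to be the strict separation $|T_k|\le 1 < T_k(c_1)$: it hinges on $c_1>1$, which requires $\lambda_2<\lambda_n$ for the constants to be well defined (the preconditioning being meaningful only when the condition number exceeds one), and on the eigenvalue-clustering claim that places the rescaled, shifted spectrum inside $[-1,1]$. Everything else is routine bookkeeping about a polynomial applied to a diagonalizable matrix.
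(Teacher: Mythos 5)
Your proof is correct and follows essentially the same route as the paper: both reduce the claim to a scalar question about $P_k$ on the spectrum of $c_2\operatorname{L}$, use $P_k(0)=0$ for one inclusion, and rule out nonzero eigenvalues by noting $c_1(1-c_2\lambda_j)\in[-1,1]$, where $|T_k|\le 1$, while $T_k(c_1)>1$. The only difference is cosmetic: the paper checks $\operatorname{span}\{1_n\}\subseteq\ker P_k(c_2\operatorname{L})$ by an explicit induction on the Chebyshev recurrence, whereas you get it directly from the functional calculus for symmetric matrices (and you additionally flag the implicit requirement $\lambda_2<\lambda_n$ for $c_1$ to be finite) --- a mild streamlining rather than a different argument.
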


Thus, we may encode the constraint of the problem \eqref{eq: distributed_problem} as $\sqrt{P_k(c_2\mathbf{L})}\bx = 0$ and form the associated saddle point problem.  Using the recurrence relation for the Chebyshev polynomials we arrive at the following preconditioned primal dual algorithm.
\begin{algorithm}[h]
	{\footnotesize \vspace*{1ex}
		\textbf{Given}: $f = \sum_{i=1}^n f_i$, where $f$ has $L_f$-Lipschitz gradient, $g = \sum_{i=1}^n g_i$.  An undirected connected communication graph with gossip matrix $\operatorname L = [\operatorname L_{ij}]$, Chebyshev polynomial degree $K$, step size $\alpha>0$, augmentation parameter $\rho > 0$, set $\beta = \frac{1}{P_K(c_1\lambda_n)}\bigg(\frac{1}{\alpha} - L_f\bigg)-\rho$.
		\begin{algorithmic}[1]
			\State Initialize at $x_i^0$ such that $x_i^0 = x_j^0$ for all $i,j =1,\ldots,n$, initialize $\nu_i^0$ such that $\sum_i \nu_i^0 = 0$			
			\For {$k=0,1,2,\cdots$ all agents}
			\State 
			$
			\hat x_i^{k+1} = x_i^k -\alpha\big(\nabla f_i(x_i^k) + \nu_i^k).
			$\vspace{.3 mm}
			\State
			$
			x_i^{k+1} = \argmin_x \{g_i(x) + \frac{1}{2\alpha}\|x - \hat x_i^{k+1}\|_2^2\}
			$\vspace{1 mm}
			\LineComment{/* \texttt{Initialization of auxiliary variables for accelerated gossip} */} \vspace{1 mm}
			\State 
			$
			a_i^0 = 1,\;a_i^1 = c_1.
			$
			\State
			$
			\xi_i^0 = (\rho+2\beta)x_i^{k+1}-(\rho + \beta)x_i^k.
			$\vspace{.3 mm}
			\State
			$
			\xi_i^1 = c_1\xi_i^0 - c_1c_2\sum_{\ell \in \mathcal N_i} \operatorname L_{i\ell} \xi_\ell^0.
			$\vspace{1 mm}
			\LineComment{/* \texttt{Accelerated gossip rounds} */} \vspace{1 mm}
			\For {$j=1,2,\ldots,K-1$ all agents} 
			\State 
			$
			a_i^{j+1} = 2c_1a_i^j - a_i^{j-1}
			$\vspace{.3 mm}
			\State
			$
			\xi_i^{j+1} = 2c_1\xi_i^j - 2c_1c_2 \sum_{\ell \in \mathcal N_i} \operatorname L_{i\ell} \xi_\ell^j - \xi_i^{j-1}
			$\vspace{.3 mm}
			\EndFor
			
			\State 
			$
			\nu_i^{k+1}  = \nu_i^k + \bigg(\xi_i^0 - \frac{\xi_i^K}{a_i^K}\bigg).
			$
			\EndFor
	\end{algorithmic}}
	\caption{\hspace*{-.5ex} Chebyshev-Accelerated Distributed Primal-Dual Algorithm} \label{Algorithm: GPDPDA}

\end{algorithm}

Note that, for a given primal step size $\alpha$, we have set the dual step size, $\beta = \frac{1}{P_K(c_1\lambda_n)}\bigg(\frac{1}{\alpha} - L_f\bigg) - \rho$, to be as large as possible (as given by Theorem 1) after the effect of the Chebyshev preconditioning on the Laplacian matrix.  The updates for $\hat x_i^{k+1}$ and $x_i^{k+1}$ are identical to those in Algorithm \ref{Algorithm: Vanilla_PD}.  After these updates we must use the auxiliary variables $\{ \xi_i^j\}_{j=1}^K$  and $\{ a_i^j \}_{j=1}^K$ in order to compute the recurrence for the Chebyshev polynomial.  The net effect of the inner loop is to use a gossip matrix with better spectral properties, at the cost of taking $K$ extra rounds of communication per gradient update.  Note that this effect is not achieved by simply performing the dual update in Algorithm \ref{Algorithm: Vanilla_PD} (the original algorithm without preconditioning) $K$ times per primal update.  Depending on the specific application, one would have to decide how many extra rounds of communication can be tolerated based on the relative costs of gradient updates and communications between agents.  When the cost of agent communications is relatively cheap compared to the cost of gradient updates, the degree of the preconditioning Chebyshev polynomial can be larger without significantly increasing the time it takes for the algorithm to converge.


\section{Numerical Simulations} \label{sec: Numerical Simulations}

In this section, we consider a distributed sparse signal recovery problem with an unknown signal $x_0 \in \mathbb{R}^{d}$ ($d=1024$), which consists of $T=10$ spikes with amplitude $\pm 1$. Consider a network of $n=100$ agents, in which agent $i \in \{1,\cdots,n\}$ measures $b_i = A_ix_0+v_i \in \mathbb{R}^{n_i}$ ($n_i=10$ for all $i$), where $v_i$ is drawn according to the Gaussian distribution $\mathcal{N}(0, 0.01 I_{n_i})$ on $\mathbb{R}^{n_i}$, and $A_i \in \mathbb{R}^{n_i \times d}$ is the measurement matrix ($n_i<d$) of agent $i$. The aggregated measurement matrix $A=[A_1^\top \cdots A_n^\top]^\top \in \mathbb{R}^{1000 \times 1024}$ is obtained by orthagonalizing the rows of a $1000 \times 1024$ matrix whose entries are generated independently and identically according to the standard normal distribution \cite{kim2007interior}. The goal of the agents is to recover the sparsity pattern of $x^\star$ without exchanging their data $(A_i,b_i)$. The corresponding centralized optimization problem can be written as
\begin{align} \label{eq: sparse recovery distributed}
&\mathrm{minimize}_{x \in \mathbb{R}^d} \ &&\sum_{i=1}^{n} \{\underbrace{\frac{1}{2} \|A_ix_i-b_i\|_2^2}_{f_i(x)} \ +\underbrace{ \alpha_i \|x\|_1}_{g_i(x_i)}\}  \\
&\text{subject to} &&x_1=\cdots=x_n, \nonumber
\end{align} 
where $\alpha_i = \alpha/n$, $\alpha=0.01$ for all $i$. The first term is the sum of the squared measurement errors and the second term is an $\ell_1$ regularization to enhance sparsity in the decision variables. 
\begin{figure}
	\centering
	\begin{subfigure}{0.5\textwidth}
		\includegraphics[width=\linewidth]{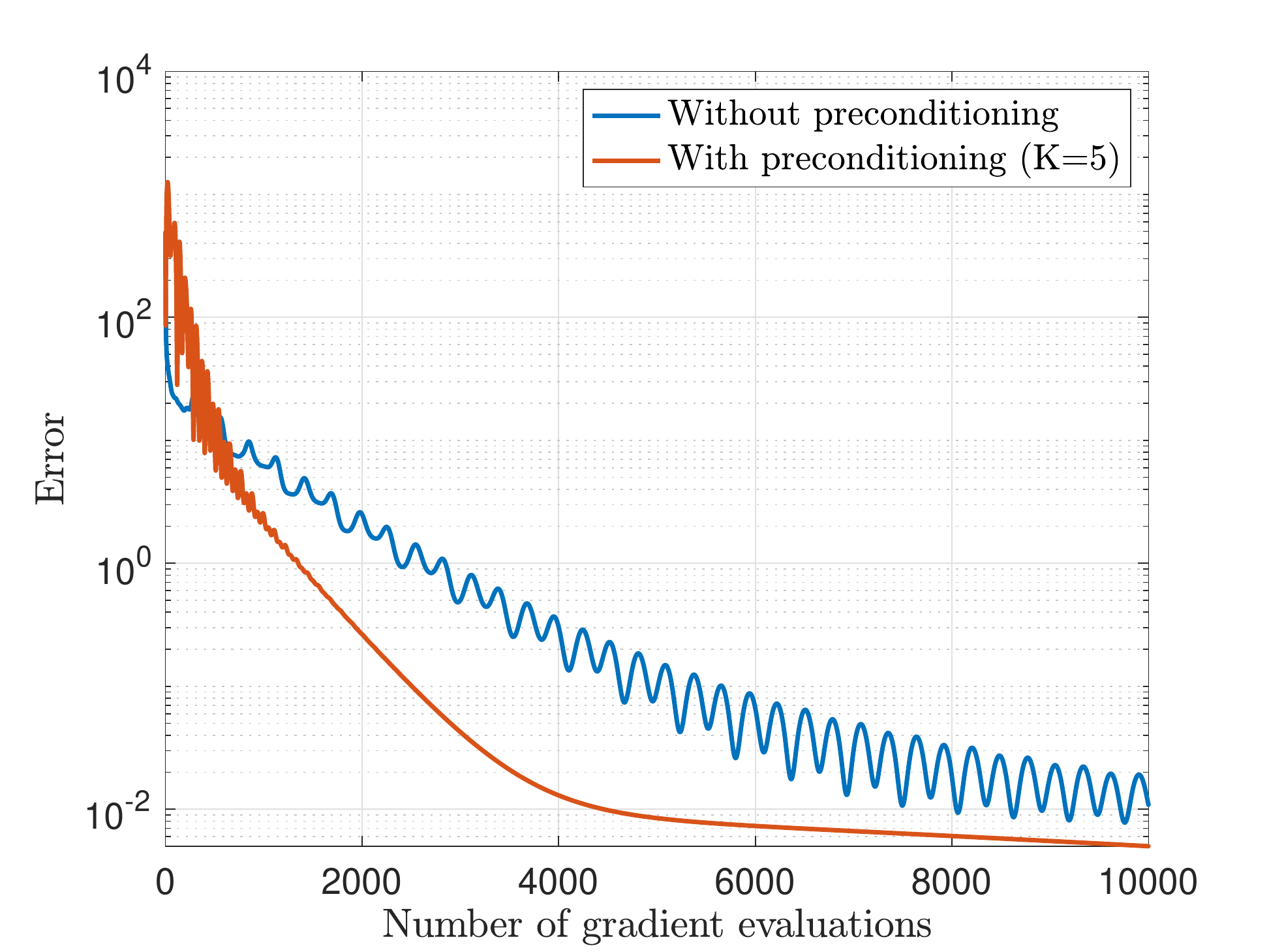}
		\caption{\small Average suboptimality for sparse signal recovery on on a chain graph with $n=100$ nodes.}\label{fig: chain graph error evolution 1}
	\end{subfigure}
	\begin{subfigure}{0.5\textwidth}
		\includegraphics[width=\linewidth]{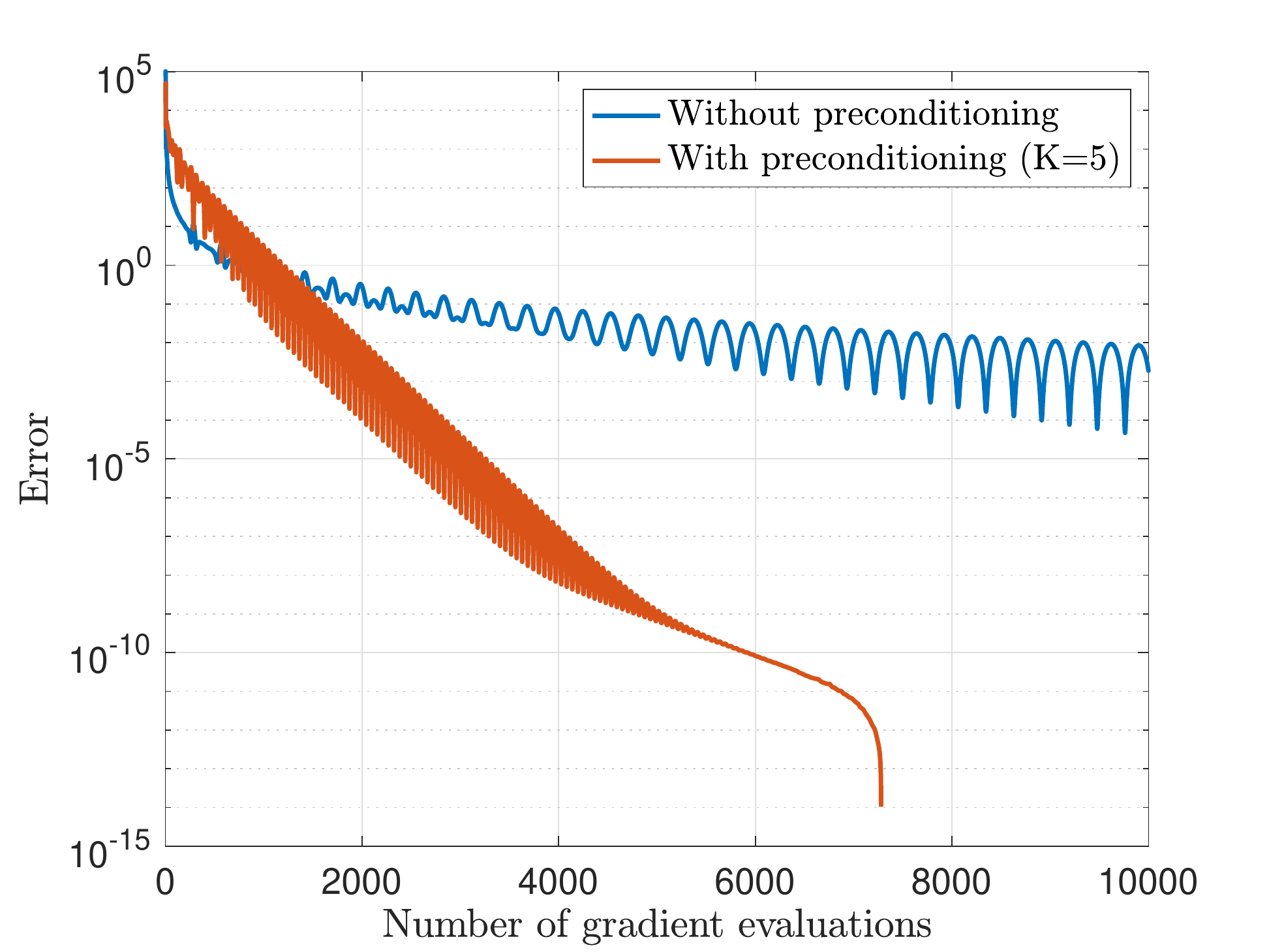}
		\caption{\small Total disagreement between agents on a chain graph with $n=100$ nodes.}\label{fig: chain graph error evolution 2}
	\end{subfigure}
	\caption{}
\end{figure}
Given these numerical values, we run Algorithm \ref{Algorithm: Vanilla_PD} and \ref{Algorithm: GPDPDA} with an appropriate selection of the step sizes prescribed by Theorem  \ref{theorem: convergence_thrm}. To measure the progress of the algorithm, we compute the quantities
\begin{subequations}
\begin{align}
\varepsilon_1^k &= \dfrac{1}{n} \sum_{i=1}^{n} \{(f_i(x^k_i)-f_i(x^*)) + (g_i(x^k_i)-g_i(x^*))\}, \\
\varepsilon_2^k &=\dfrac{1}{2} \sum_{i=1}^{n} \sum_{j \in \mathcal{N}_i} \|x_i^k-x_j^k \|_2^2.
\end{align}
\end{subequations}
where $x^\star$ is an optimal solution to the centralized problem. The first term, $\varepsilon_1^k$, is the average suboptimality, while the second term, $\varepsilon_2^k$, quantifies the disagreement between the agents states. 

In Figure \ref{fig: chain graph error evolution 1}, we plot $\varepsilon_1^k$ and $\varepsilon_2^k$ for Algorithm \ref{Algorithm: GPDPDA} with $K=1$ (no preconditioning) and $K=5$ (preconditioning using five communication rounds per gradient update) applied to a chain network.  Such a network has poor mixing properties, and we clearly observe the benefit of Chebyshev preconditioning on the empirical convergence rate. For instance, without preconditioning, the number of gradient evaluations per node to achieve an accuracy level of $0.1$ is roughly $5000$, while this number is roughly $2500$ after preconditioning.  In Figure \ref{fig: chain graph error evolution 2} we see how far from consensus the network is as the iterations increase.  The high degree of oscillations correspond to the difficulty in maintaining consensus during the iterations for a network with a high condition number, such as a chain graph.

We also repeat the experiment for an Erd\H{o}s-R\'enyi network with an average node degree of $3$ and we plot the error in Figure \ref{fig: random graph error evolution 1}. Here we observe a significant increase the performance of our algorithm compared to the standard primal dual method.  From Figure \ref{fig: random graph error evolution 2} we observe that preconditioning drives the network more quickly to consensus.  

We acknowledge that the added complexity of the additional rounds of communication in the preconditioned case is hidden in these plots.  However, for the case where gradients are much more expensive to compute compared to the communications these plots give a reasonable indication of relative performance of the algorithms.  We emphasize that our algorithm is most useful in such scenarios.

\begin{figure}
	\centering
	\begin{subfigure}{0.5\textwidth}
		\includegraphics[width=\linewidth]{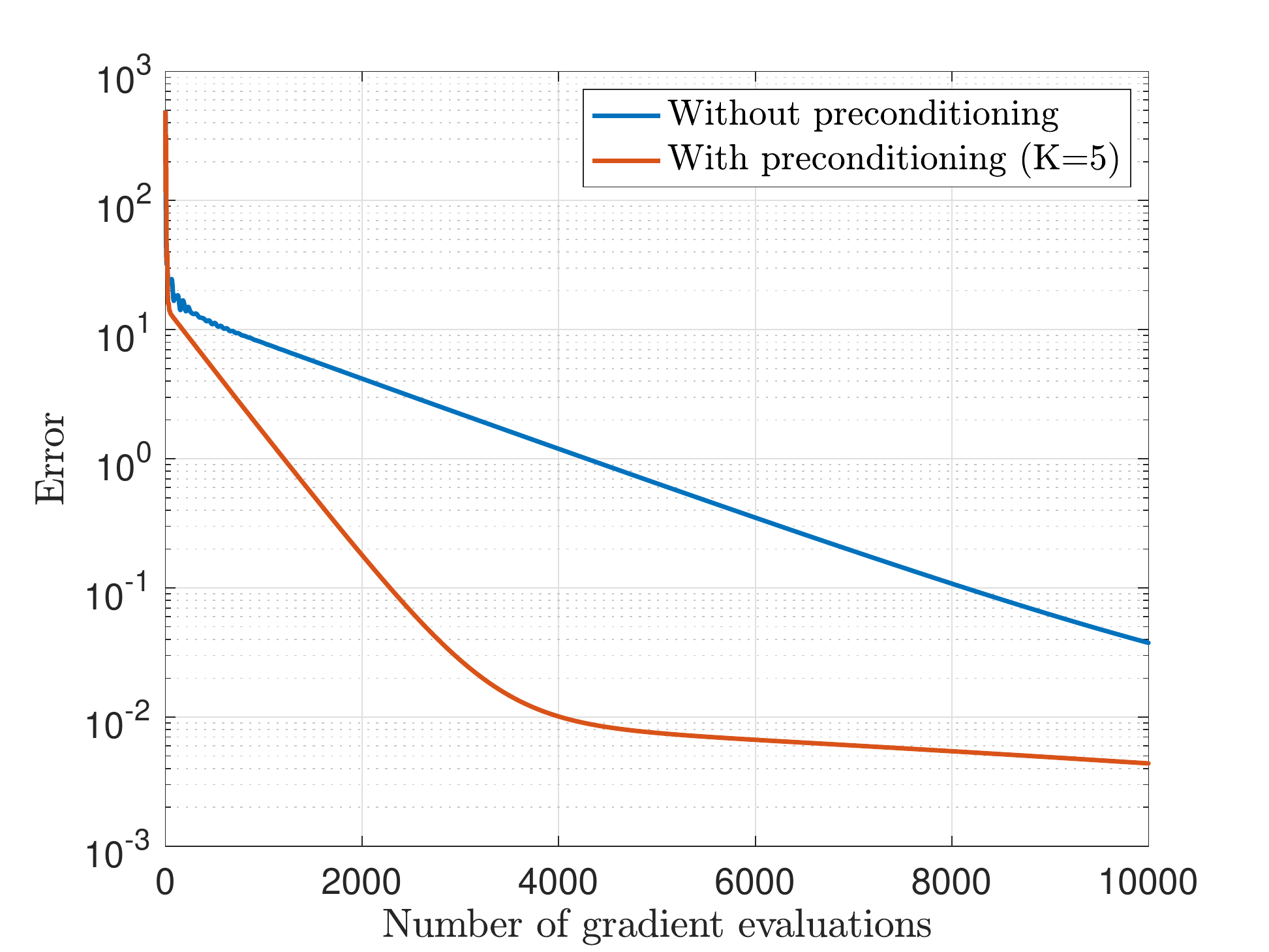}
		\caption{\small Average suboptimality for sparse signal recovery on an\\ Erd\H{o}s-R\'enyi graph with average degree $3$ and $n=100$ nodes.}\label{fig: random graph error evolution 1}
	\end{subfigure}
	\begin{subfigure}{0.5\textwidth}
	\includegraphics[width=\linewidth]{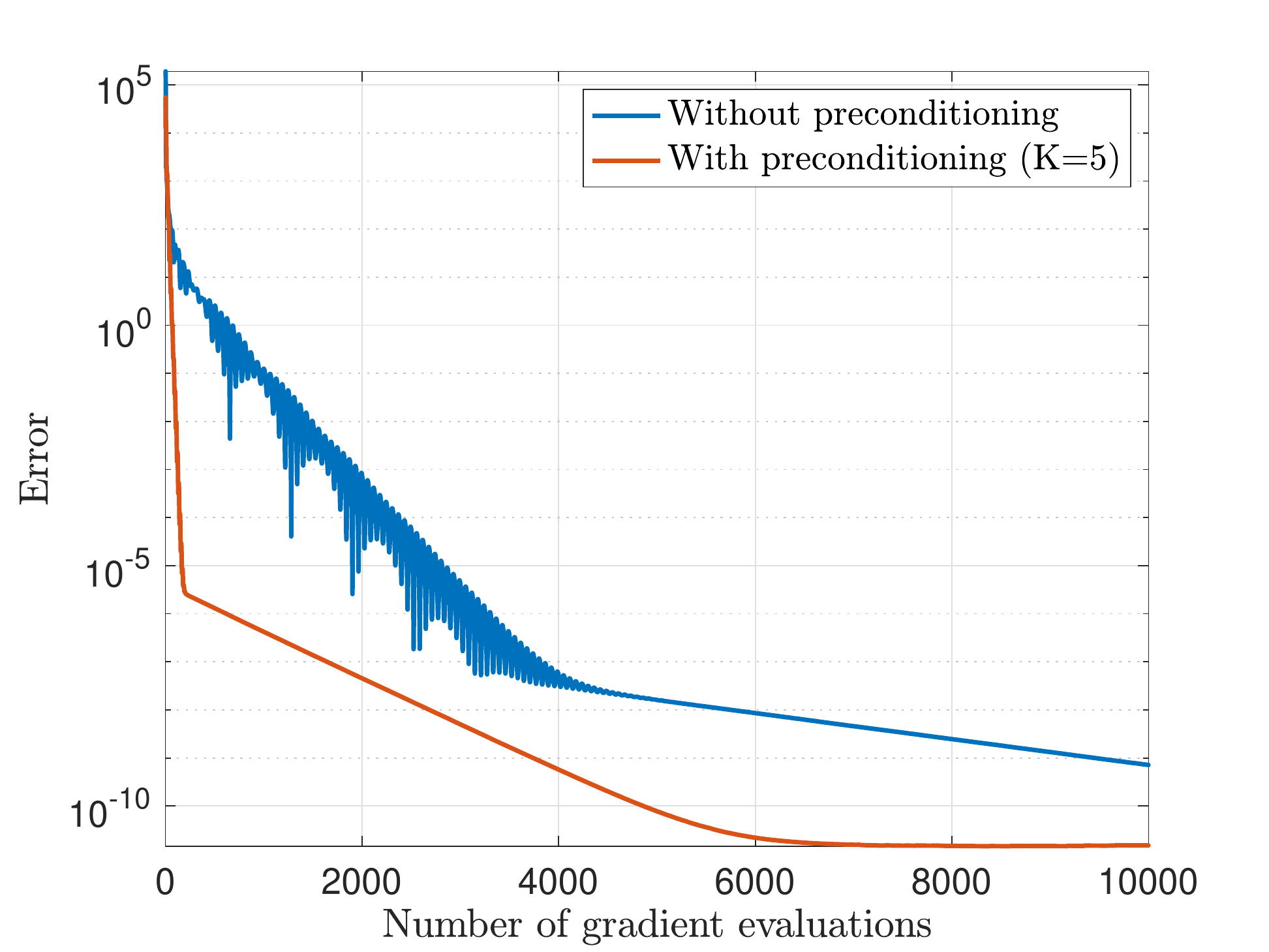}
	\caption{\small Total disagreement between agents on an Erd\H{o}s-R\'enyi graph \\with average degree $3$ and $n=100$ nodes.}\label{fig: random graph error evolution 2}
\end{subfigure}
\caption{}
\end{figure}

\section{Conclusion}

We have considered a distributed optimization problem over a network of agents. To this end, our main contribution is a distributed Chebyshev-accelerated primal-dual algorithm where we use Chebyshev matrix polynomials to generate gossip matrices whose spectral properties result in faster convergence speeds, while allowing for a fully distributed implementation. As a result, the proposed algorithm requires fewer gradient updates at the cost of additional rounds of communications between agents. We have illustrated the performance of the proposed algorithm in a distributed signal recovery problem. We have also shown how the use of Chebyshev matrix polynomials can be used to improve the convergence speed over communication networks by trading local computation by communication rounds. For future work, we will explore the possibility of using Chebyshev polynomials to gossip the gradients between updates to obtain a faster convergence. Furthermore, we will study the option of adding (properly tuned) momentum terms to the updates in order to improve the convergence speed of our algorithm.

\section{Appendix}

\subsection{Proof of Proposition \ref{proposition: kernel_check}} \label{proof: kernel_check proof}

From the definition of $\mathbf L$ it suffices to prove that $\ker P_k(c_2 \operatorname L) = \operatorname{span}\{1_n\}$.  Since $0$ is an eigenvalue of $\operatorname L$ we have that $0$ is an eigenvalue of $P_k(c_2 \operatorname L)$ as well, as $P_K(0) = 0$.  We claim that this eigenvalue has multiplicity 1.  If not, then there is some nonzero eigenvalue $\lambda$ of $c_2 \operatorname L$, such that $P_k(\lambda) = 0$ or equivalently, that $T_k(c_1(1 - \lambda)) = T_k(c_1)$.  

Recall that $c_1 > 1$ and note that the nonzero eigenvalues of $c_2 \operatorname L$ lie in the interval $[1 - c_1^{-1}, 1 + c_1^{-1}]$, so $\lambda$ is contained in this interval.  This implies that $c_1(1 - \lambda) \in [-1,1]$.  Since $|T_k(x)| > 1$ when $|x| > 1$ and $|T_k(x)| \leq 1$ otherwise, we cannot have $T_k(c_1(1-\lambda)) = T_k(c_1)$ unless $\lambda = 0$.  Thus, $\lambda = 0$ has multiplicity 1 as an eigenvalue of $P_k(c_2 \operatorname L)$.

It remains to show that for $x \in \operatorname{span}\{1_n \}$ we have $P_k(c_2 \operatorname L)x = 0$.  From the definition of $P_k$, it suffices to prove that
$$T_k(c_1(\operatorname I_n - c_2 \operatorname L)) = T_k(c_1)x.$$
We proceed inductively using the recurrence relation for $T_k$.  Assuming that $T_{k-1}(c_1(\operatorname I_n - c_2 \operatorname L)) = T_{k-1}(c_1)x$ and using the fact that $\operatorname L x = 0$, we see
\begin{align}
T_k(c_1(\operatorname I_n - c_2 \operatorname L))x &= 2c_1(\operatorname I_n - c_2 \operatorname L)T_{k-1}(\operatorname I_n - c_2 \operatorname L)x \nonumber\\
 &\quad - T_{k-2}(\operatorname I_n - c_2 \operatorname L)x \nonumber\\
&=2c_1(\operatorname I_n \!- \!c_2 \operatorname L)T_{k-1}(c_1)x \!-\! T_{k-2}(c_1)x \nonumber\\
&= (2c_1 T_{k-1}(c_1) - T_{k-2}(c_1))x \nonumber\\
&= T_k(c_1)x.\nonumber
\end{align}
Hence, $\operatorname{span}\{1_n \}\subseteq \ker P_k(c_2 \operatorname L)$, and since we have shown $\ker P_k(c_2 \operatorname L)$ is one dimensional, we conclude $\ker P_k(c_2 \operatorname L) = \operatorname{span}\{1_n \}$, as desired. $\blacksquare$


\bibliographystyle{ieeetr}

\bibliography{references}

\end{document}